\newcommand{\bdry}[1]{\partial #1}
\newcommand{\bgset}[1]{\big\{#1\big\}}
\newcommand{\B}{{\cal B}}
\newcommand{\closure}[1]{\overline{#1}}
\newcommand{\dint}{\ds{\int}}
\newcommand{\ds}[1]{\displaystyle #1}
\newcommand{\eps}{\varepsilon}
\newcommand{\goodchi}{\protect\raisebox{2pt}{$\chi$}}
\newcommand{\half}{\frac{1}{2}}
\newcommand{\interior}[1]{#1^\circ}
\newcommand{\loc}{\text{loc}}
\renewcommand{\L}{{\mathcal L}}
\newcommand{\norm}[2][]{\left\|#2\right\|_{#1}}
\newcommand{\PS}[1]{$(\text{PS})_{#1}$}
\newcommand{\QED}{\mbox{\qedhere}}
\newcommand{\R}{\mathbb R}
\newcommand{\seq}[1]{\left(#1\right)}
\newcommand{\set}[1]{\left\{#1\right\}}
\newcommand{\strictsubset}{\subset \subset}
\newcommand{\vol}[1]{\if #1'' \L \else \L(#1) \fi}
\DeclareMathOperator{\divg}{div}
\DeclareMathOperator{\supp}{supp}
\newenvironment{enumroman}{\begin{enumerate}

}{\end{enumerate}}
\newtheorem{lemma}{Lemma}[section]
\newtheorem{theorem}[lemma]{Theorem}
\numberwithin{equation}{section} 
\title{\bf On a class of elliptic free boundary problems with multiple solutions\thanks{{\em MSC2010:} Primary 35R35, Secondary 35Q35, 35J20
\newline \indent\; {\em Key Words and Phrases:} Elliptic free boundary problems, nondifferentiable energy functionals, approximation and variational methods, multiple nontrivial solutions}}
\author{\bf Kanishka Perera\\
Department of Mathematical Sciences\\
Florida Institute of Technology\\
Melbourne, FL 32901, USA\\
\em kperera@fit.edu}
\date{}
\begin{document}

\maketitle

\begin{abstract}
We prove that a certain class of elliptic free boundary problems, which includes the Prandtl-Batchelor problem from fluid dynamics as a special case, has two distinct nontrivial solutions for large values of a parameter. The first solution is a global minimizer of the energy. The energy functional is nondifferentiable, so standard variational arguments cannot be used directly to obtain a second nontrivial solution. We obtain our second solution as the limit of mountain pass points of a sequence of $C^1$-functionals approximating the energy. We use careful estimates of the corresponding energy levels to show that this limit is neither trivial nor a minimizer.
\end{abstract}

\newpage

\section{Introduction}

Consider the class of sublinear elliptic free boundary problems
\begin{equation} \label{1.1}
\left\{\begin{aligned}
- \Delta u & = \lambda\, \goodchi_{\set{u > 1}}(x)\, g(x,(u - 1)_+) && \text{in } \Omega \setminus F(u)\\[10pt]
|\nabla u^+|^2 - |\nabla u^-|^2 & = 2 && \text{on } F(u)\\[10pt]
u & = 0 && \text{on } \bdry{\Omega},
\end{aligned}\right.
\end{equation}
where $\Omega$ is a bounded domain in $\R^N,\, N \ge 2$ with $C^{2,\alpha}$-boundary $\bdry{\Omega}$,
\[
F(u) = \bdry{\set{u > 1}}
\]
is the free boundary of $u$, $\lambda > 0$ is a parameter, $\goodchi_{\set{u > 1}}$ is the characteristic function of the set $\set{u > 1}$, $(u - 1)_+ = \max\, (u - 1,0)$ is the positive part of $u - 1$, $\nabla u^\pm$ are the limits of $\nabla u$ from the sets $\set{u > 1}$ and $\interior{\set{u \le 1}}$, respectively, and $g : \Omega \times [0,\infty) \to [0,\infty)$ is a locally H\"{o}lder continuous function satisfying
\begin{enumerate}
\item[$(g_1)$] for some $a_1, a_2 > 0$ and $1 < p < 2$,
    \[
    |g(x,s)| \le a_1 + a_2\, s^{p-1} \quad \forall (x,s) \in \Omega \times [0,\infty),
    \]
\item[$(g_2)$] $g(x,s) > 0$ for all $x \in \Omega$ and $s > 0$.
\end{enumerate}
The purpose of this paper is to prove that this problem has two distinct nontrivial (suitably generalized) solutions for all sufficiently large $\lambda$.

The special case $g(x,s) \equiv 1$ is the well-known Prandtl-Batchelor free boundary problem, where the phase $\set{u > 1}$ represents a vortex patch bounded by the vortex line $u = 1$ in a steady-state fluid flow when $N = 2$ (see Batchelor \cite{MR0084296,MR0084310}). This particular case has been studied in Caflisch \cite{MR1001785}, Elcrat and Miller \cite{MR1285987}, Acker \cite{MR1606934,MR1910339}, and Jerison and Perera \cite{JePe}. Problem \eqref{1.1} also arises in the confinement of a plasma by a magnetic field, where the region $\set{u > 1}$ represents the plasma and the boundary of the plasma is the free boundary (see, e.g., Temam \cite{MR0412637, MR0602544}, Caffarelli and Friedman \cite{MR587175}, Friedman and Liu \cite{MR1360544}, and Jerison and Perera \cite{MR3790500}).

The solutions of problem \eqref{1.1} that we construct here are Lipschitz continuous functions of class $H^1_0(\Omega) \cap C^2(\closure{\Omega} \setminus F(u))$ that satisfy the equation $- \Delta u = \lambda\, \goodchi_{\set{u > 1}}(x)\, g(x,(u - 1)_+)$ in the classical sense in $\Omega \setminus F(u)$ and vanish continuously on $\bdry{\Omega}$. They satisfy the free boundary condition in the following generalized sense: for all $\Phi \in C^1_0(\Omega,\R^N)$ such that $u \ne 1$ a.e.\! on the support of $\Phi$,
\[
\lim_{\delta^+ \searrow 0}\, \int_{\set{u = 1 + \delta^+}} \left(2 - |\nabla u|^2\right) \Phi \cdot n\, d\sigma - \lim_{\delta^- \searrow 0}\, \int_{\set{u = 1 - \delta^-}} |\nabla u|^2\, \Phi \cdot n\, d\sigma = 0,
\]
where $n$ is the outward unit normal to $\set{1 - \delta^- < u < 1 + \delta^+}$ (the sets $\set{u = 1 \pm \delta^\pm}$ are smooth hypersurfaces for a.a.\! $\delta^\pm > 0$ by Sard's theorem and the above limits are taken through such $\delta^\pm$). In particular, the free boundary condition is satisfied in the classical sense on any smooth portion of $F(u)$.

The variational functional associated with problem \eqref{1.1} is given by
\[
J(u) = \int_\Omega \left[\half\, |\nabla u|^2 + \goodchi_{\set{u > 1}}(x) - \lambda\, G(x,(u - 1)_+)\right] dx, \quad u \in H^1_0(\Omega),
\]
where
\[
G(x,s) = \int_0^s g(x,t)\, dt, \quad s \ge 0.
\]
We will prove the following multiplicity result.

\begin{theorem} \label{Theorem 1.1}
Assume $(g_1)$ and $(g_2)$. Then there exists a $\lambda^\ast > 0$ such that for all $\lambda > \lambda^\ast$, problem \eqref{1.1} has two Lipschitz continuous solutions $u_0, u_1 \in H^1_0(\Omega) \cap C^2(\closure{\Omega} \setminus F(u))$ that satisfy the equation $- \Delta u = \lambda\, \goodchi_{\set{u > 1}}(x)\, g(x,(u - 1)_+)$ in the classical sense in $\Omega \setminus F(u)$, the free boundary condition in the generalized sense, and vanish continuously on $\bdry{\Omega}$. Moreover,
\begin{enumroman}
\item $J(u_0) < - \vol{\Omega} \le - \vol{{\set{u_1 = 1}}} < J(u_1)$, where $\vol{}$ denotes the Lebesgue measure in $\R^N$, and hence $u_0$ and $u_1$ are nontrivial and distinct;
\item $0 < u_1 \le u_0$, the sets $\set{u_0 < 1} \subset \set{u_1 < 1}$ are connected if $\bdry{\Omega}$ is connected, and the sets $\set{u_0 > 1} \supset \set{u_1 > 1}$ are nonempty;
\item $u_0$ is a minimizer of $J$, but $u_1$ is not a minimizer of $J$.
\end{enumroman}
\end{theorem}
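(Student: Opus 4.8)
The plan is to produce $u_0$ as a global minimizer of $J$ and $u_1$ as the limit of mountain pass points of smooth truncated approximations of $J$, and then to read off every assertion of the theorem from a careful comparison of the corresponding energy levels.

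\medskip
\noindent\emph{The minimizer.}
First I would check that $J$ is coercive and sequentially weakly lower semicontinuous on $H^1_0(\Omega)$. Coercivity follows from $(g_1)$, since $G(x,s)\le a_1 s+\tfrac{a_2}{p}\,s^p$ with $p<2$ makes $\lambda\int_\Omega G(x,(u-1)_+)\,dx$ an $o\bigl(\norm{u}^2\bigr)$ perturbation of $\half\int_\Omega|\nabla u|^2\,dx$ via the Sobolev inequality, while $0\le\int_\Omega\goodchi_{\set{u>1}}\,dx\le\vol{\Omega}$. For the lower semicontinuity, the Dirichlet term is classical, $\int_\Omega G(x,(u-1)_+)\,dx$ is weakly continuous by the compact subcritical Sobolev embeddings, and if $u_n\rightharpoonup u$ then $u_n\to u$ a.e.\ along a subsequence, so that $\liminf_n\goodchi_{\set{u_n>1}}\ge\goodchi_{\set{u>1}}$ pointwise a.e.\ and Fatou's lemma gives $\liminf_n\int_\Omega\goodchi_{\set{u_n>1}}\,dx\ge\int_\Omega\goodchi_{\set{u>1}}\,dx$. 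Hence $J$ attains its infimum at some minimizer $u_0$; replacing $u_0$ by $u_0^+$ does not increase $J$, so $u_0\ge0$, and testing $J$ with a fixed $v\in H^1_0(\Omega)$ for which $\vol{{\set{v>1}}}>0$ — so that $\int_\Omega G(x,(v-1)_+)\,dx>0$ by $(g_2)$ — shows $J(v)\to-\infty$ as $\lambda\to\infty$, whence $J(u_0)\le J(v)<-\vol{\Omega}$ once $\lambda$ is large, and in particular $\set{u_0>1}\ne\emptyset$. Since $J$ is of Alt--Caffarelli type, the term $-\lambda\,G$ being a sublinear, hence lower-order, perturbation, the known free boundary regularity theory for such minimizers applies: $u_0$ is Lipschitz, harmonic in $\set{u_0<1}$ and smooth in $\set{u_0>1}$, solves the equation classically in $\Omega\setminus F(u_0)$ and the free boundary condition in the stated generalized sense, and $\vol{F(u_0)}=0$. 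The strong maximum principle then gives $u_0>0$ in $\Omega$, and if $\bdry{\Omega}$ is connected then $\set{u_0<1}$ is connected, since a component of it compactly contained in $\Omega$ would contradict the minimum principle for the harmonic function $u_0-1<0$ on that component.

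\medskip
\noindent\emph{Approximation and mountain pass geometry.}
Fix smooth nondecreasing $\beta_\eps:\R\to[0,1]$ with $\beta_\eps\equiv0$ on $(-\infty,0]$ and $\beta_\eps\equiv1$ on $[\eps,\infty)$, and set
\[
J_\eps(u)=\int_\Omega\Bigl[\,\half\,|\nabla u|^2+\beta_\eps(u-1)-\lambda\,G(x,(u-1)_+)\,\Bigr]dx ,
\]
which is of class $C^1$ on $H^1_0(\Omega)$ (when $g(\cdot,0)\not\equiv0$ one also smooths the positive part $(u-1)_+$ in the last term, which affects nothing below), is coercive for the same reason as $J$, and satisfies the Palais--Smale condition by coercivity together with the compactness of the lower-order terms. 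Since $\beta_\eps(u-1)\le\goodchi_{\set{u>1}}$ pointwise we have $J_\eps\le J$, so the global minimizer $u_\eps^0$ of $J_\eps$ satisfies $J_\eps(u_\eps^0)\le J(u_0)<-\vol{\Omega}<0=J_\eps(0)$. On the other hand, for $\norm{u}$ small one has $\int_\Omega G(x,(u-1)_+)\,dx=o\bigl(\norm{u}^2\bigr)$ uniformly in $\eps$ (because both $\vol{{\set{u>1}}}$ and $\norm[L^q(\set{u>1})]{u}$ are then small) and $\beta_\eps(u-1)\ge0$, so there are $\rho,\alpha>0$ independent of $\eps$ with $J_\eps\ge\alpha$ on $\set{\norm{u}=\rho}$ and, after shrinking $\rho$, $J>-\vol{\Omega}$ on $\set{\norm{u}\le\rho}$, whence $\norm{u_0},\norm{u_\eps^0}>\rho$. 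Thus $J_\eps$ has mountain pass geometry between its two solutions $0$ and $u_\eps^0$, and carrying out the minimax inside the order interval $\set{0\le u\le u_\eps^0}$ — which is positively invariant under an appropriate negative gradient flow of $J_\eps$, both endpoints being solutions — produces a critical point $u_\eps$ with $0\le u_\eps\le u_\eps^0$ and $J_\eps(u_\eps)=c_\eps\in[\alpha,C]$, the uniform upper bound $C$ coming from the straight-line path $t\mapsto t\,u_\eps^0$ and the $\eps$-uniform bound on $\norm{u_\eps^0}$ forced by coercivity and $J_\eps(u_\eps^0)\le J(u_0)$.

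\medskip
\noindent\emph{Passage to the limit.}
From $(g_1)$ and the uniform $H^1_0$-bound a Moser iteration gives $\norm[L^\infty]{u_\eps}\le M$ uniformly. The a priori gradient estimate for such semilinear free-boundary approximations — in the transition layer $\set{1<u_\eps<1+\eps}$ the profile is governed by the ODE $v''=\beta_\eps'(v)$, whose first integral $(v')^2=2\beta_\eps(v)+\mathrm{const}$ supplies both a uniform Lipschitz bound and, in the limit, the free boundary condition $|\nabla u_1^+|^2-|\nabla u_1^-|^2=2$, as in \cite{MR1001785,JePe} — then yields a uniform Lipschitz bound. Along a subsequence $u_\eps\to u_1$ in $C^{0,\gamma}(\closure{\Omega})$ and, since this bound together with $\vol{F(u_1)}=0$ prevents energy concentration on $F(u_1)$, also strongly in $H^1_0(\Omega)$; elliptic estimates give convergence in $C^\infty_{\loc}(\Omega\setminus F(u_1))$ and $u_1$ is a generalized solution of \eqref{1.1}. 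The same arguments applied to the $u_\eps^0$ show $u_\eps^0\to u_0$ for some minimizer $u_0$ of $J$, which we now fix as our first solution; hence $0\le u_1\le u_0$, so $\set{u_0<1}\subset\set{u_1<1}$ and $\set{u_0>1}\supset\set{u_1>1}$, the connectedness of $\set{u_1<1}$ follows exactly as for $u_0$, and $u_1>0$ in $\Omega$ by the maximum principle once $\set{u_1>1}\ne\emptyset$ is known.

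\medskip
\noindent\emph{The energy levels, and the main difficulty.}
This is the decisive step. On $\set{u_1\ne1}$ one has $\beta_\eps(u_\eps-1)\to\goodchi_{\set{u_1>1}}$ a.e.\ (from the uniform convergence of $u_\eps$ and $\eps\to0$), while $0\le\beta_\eps(u_\eps-1)\le1$ everywhere, so, along a further subsequence,
\[
\int_\Omega\goodchi_{\set{u_1>1}}\,dx\ \le\ \lim_{\eps\to0}\int_\Omega\beta_\eps(u_\eps-1)\,dx\ \le\ \int_\Omega\goodchi_{\set{u_1>1}}\,dx+\vol{{\set{u_1=1}}} .
\]
Combining this with the strong $H^1_0$-convergence and $\int_\Omega G(x,(u_\eps-1)_+)\,dx\to\int_\Omega G(x,(u_1-1)_+)\,dx$ gives
\[
\lim_{\eps\to0} c_\eps-\vol{{\set{u_1=1}}}\ \le\ J(u_1)\ \le\ \lim_{\eps\to0} c_\eps .
\]
Since $c_\eps\ge\alpha>0$, the left inequality yields $J(u_1)\ge\alpha-\vol{{\set{u_1=1}}}>-\vol{{\set{u_1=1}}}\ge-\vol{\Omega}$, whereas $J(u_0)<-\vol{\Omega}$ from the first step; this is precisely (i), and it shows at once that $u_1\ne u_0$, that $u_1$ is not a minimizer of $J$ (hence (iii)), and that $u_1\not\equiv0$ (as $u_1\equiv0$ would give $J(u_1)=0$, contradicting $J(u_1)\ge\alpha>0$). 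Finally $\set{u_1>1}\ne\emptyset$, for otherwise $u_1$ solves $-\Delta u_1=\lambda\,\goodchi_{\set{u_1>1}}\,g(x,(u_1-1)_+)\equiv0$ throughout $\Omega$ with zero boundary data, forcing $u_1\equiv0$, a contradiction; and $\set{u_0>1}\ne\emptyset$ was noted above, so (i), (ii) and (iii) all hold. The step I expect to be the real obstacle is exactly this interplay: obtaining the $\eps$-uniform Lipschitz — hence strong $H^1_0$ — bound through the transition-layer analysis, so that the free boundary condition with the correct constant $2$ survives in the limit, and then estimating the energy lost on $\set{u_1=1}$ tightly enough that the strict positivity $c_\eps\ge\alpha>0$ is preserved in the lower bound for $J(u_1)$, which is what keeps the mountain pass limit from collapsing onto $u_0$ or onto $0$.
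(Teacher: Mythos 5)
Your overall architecture is the same as the paper's (smooth the characteristic term, produce a negative-energy minimizer and a positive-energy mountain pass point of the approximations, pass to the limit, and separate the two limits by the energy sandwich involving $\vol{{\set{u = 1}}}$), and your level comparisons — $J(u_0) \le \inf J < -\vol{\Omega}$, $c_\eps \ge \alpha > 0$ uniformly, and $J(u_1) > -\vol{{\set{u_1 = 1}}} \ge -\vol{\Omega}$ — are exactly the ones the paper uses. But the decisive analytic steps of the limit passage are not actually proved in your sketch. First, the $\eps$-uniform Lipschitz bound: the transition-layer ODE first integral $(v')^2 = 2\beta_\eps(v) + \mathrm{const}$ is a one-dimensional heuristic; in $N \ge 2$ the bound comes from the monotonicity-formula estimate of Caffarelli--Jerison--Kenig (the paper's Lemma 2.2), applied to the distributional inequalities $\pm \Delta u_\eps \le \frac{2}{\eps}\, \goodchi_{\set{|u_\eps - 1| < \eps}} + \lambda A_0$, and nothing in your argument replaces it. Second, strong $H^1_0$ convergence: you derive it from ``$\vol{{F(u_1)}} = 0$'', but for the mountain pass limit neither $\vol{{F(u_1)}} = 0$ nor even $\vol{{\set{u_1 = 1}}} = 0$ is known — the paper explicitly leaves the nondegeneracy and regularity of $u_1$ open, and the possibility that $\set{u_1 = 1}$ has positive measure is precisely why the energy sandwich carries that term. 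The paper proves strong convergence without any such assumption, by testing the approximating equation with $u_j - 1$ and comparing with the identities obtained from testing the limit equation with $(u - 1 - \eps)_+$ and integrating $(u - 1 + \eps)_-\,\Delta u = 0$, using only $\int_{\set{u = 1}} |\nabla u|^2\, dx = 0$. Third, the generalized free boundary condition with the constant $2$ is only gestured at; it requires the domain-variation identity (multiplying by $\nabla u_j \cdot \Phi$ and integrating by parts over $\set{1 - \delta^- < u < 1 + \delta^+}$) together with local $C^1$ convergence away from $\set{u = 1}$, which again rests on the uniform Lipschitz estimate.

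Two further points about the construction itself. Your minimax ``inside the order interval $\set{0 \le u \le u^\eps_0}$'' via an invariant negative gradient flow needs the map $s \mapsto -\frac{1}{\eps}\beta\bigl(\frac{s-1}{\eps}\bigr) + \lambda g_\eps(x,(s-1)_+) + Ks$ to be nondecreasing (or some comparable order-preservation hypothesis) to make the flow leave the interval invariant; since $g$ is only locally H\"older continuous this is not available as stated. The paper avoids this by truncating the nonlinearity at $u^\eps_0$ (the functional $\widetilde{J}_\eps$) and showing $u \le u^\eps_0$ for its critical points by the maximum principle (on $\set{u > u^\eps_0}$ the difference is harmonic), which you could adopt verbatim. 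Finally, obtaining $u_0$ by direct minimization of $J$ (coercivity plus Fatou for $\goodchi_{\set{u > 1}}$) is a legitimate shortcut to existence, but your appeal to ``known Alt--Caffarelli regularity'' to get the Lipschitz continuity and the stated generalized (domain-variation) free boundary condition for $u_0$ needs an actual argument or precise citation — the standard theory is formulated for viscosity-sense conditions and partial regularity of minimizers; the paper sidesteps this by producing $u_0$, like $u_1$, as a limit of approximate critical points, so that a single convergence lemma yields the regularity, the equation, and the free boundary condition for both solutions at once.
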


This theorem will be proved in the next section. Since $u_0$ is a minimizer of $J$, it follows from standard arguments that it satisfies the free boundary condition in the viscosity sense and its free boundary $F(u_0)$ has finite $(N - 1)$-dimensional Hausdorff measure and is a smooth hypersurface except on a closed set of Hausdorff dimension at most $N - 3$. Near the smooth subset of $F(u_0)$, $(u_0 - 1)_\pm$ are smooth and the free boundary condition is satisfied in the classical sense (see, e.g., Caffarelli and Salsa \cite{MR2145284}). The nondegeneracy and regularity of $u_1$ is presently an open problem.

\section{Proof of Theorem \ref{Theorem 1.1}}

Since the functional $J$ is nondifferentiable, we approximate it by $C^1$-functionals as follows. Let $\beta : \R \to [0,2]$ be a smooth function such that $\beta(s) = 0$ for $s \le 0$, $\beta(s) > 0$ for $0 < s < 1$, $\beta(s) = 0$ for $s \ge 1$, and $\int_0^1 \beta(s)\, ds = 1$. Then let
\[
\B(s) = \int_0^s \beta(t)\, dt
\]
and note that $\B : \R \to [0,1]$ is a smooth nondecreasing function such that $\B(s) = 0$ for $s \le 0$, $0 < \B(s) < 1$ for $0 < s < 1$, and $\B(s) = 1$ for $s \ge 1$. For $\eps > 0$, let
\[
g_\eps(x,s) = \B\left(\frac{s}{\eps}\right) g(x,s), \quad G_\eps(x,s) = \int_0^s g_\eps(x,t)\, dt, \quad s \ge 0
\]
and set
\[
J_\eps(u) = \int_\Omega \left[\half\, |\nabla u|^2 + \B\left(\frac{u - 1}{\eps}\right) - \lambda\, G_\eps(x,(u - 1)_+)\right] dx, \quad u \in H^1_0(\Omega).
\]
The functional $J_\eps$ is of class $C^1$ and its critical points coincide with weak solutions of the problem
\begin{equation} \label{2.1}
\left\{\begin{aligned}
- \Delta u & = - \frac{1}{\eps}\, \beta\left(\frac{u - 1}{\eps}\right) + \lambda\, g_\eps(x,(u - 1)_+) && \text{in } \Omega\\[10pt]
u & = 0 && \text{on } \bdry{\Omega}.
\end{aligned}\right.
\end{equation}
If $u \in H^1_0(\Omega)$ is a weak solution of this problem, then $u \in C^{2,\alpha}(\closure{\Omega})$ and is a classical solution by elliptic regularity theory. If $u$ is not identically zero, then it is nontrivial in a stronger sense, namely, $u > 0$ in $\Omega$ and $u > 1$ in a nonempty open set. Indeed, if $u \le 1$ everywhere, then $u$ is harmonic in $\Omega$ and hence vanishes identically since $u = 0$ on $\bdry{\Omega}$. Furthermore, in the set $\set{u < 1}$, $u$ is the harmonic function with boundary values $0$ on $\bdry{\Omega}$ and $1$ on $\bdry{\set{u \ge 1}}$, and hence strictly positive since $\Omega$ is connected.

First we prove the following convergence result.

\begin{lemma} \label{Lemma 2.1}
Assume $(g_1)$ and $(g_2)$. Let $\eps_j \searrow 0$ and let $u_j$ be a critical point of $J_{\eps_j}$. If the sequence $\seq{u_j}$ is bounded in $H^1_0(\Omega) \cap L^\infty(\Omega)$, then there exists a Lipschitz continuous function $u$ on $\closure{\Omega}$ such that $u \in H^1_0(\Omega) \cap C^2(\closure{\Omega} \setminus F(u))$ and, for a renamed subsequence,
\begin{enumroman}
\item \label{Lemma 2.1.i} $u_j \to u$ uniformly on $\closure{\Omega}$,
\item \label{Lemma 2.1.ii} $u_j \to u$ locally in $C^1(\closure{\Omega} \setminus \set{u = 1})$,
\item \label{Lemma 2.1.iii} $u_j \to u$ strongly in $H^1_0(\Omega)$,
\item \label{Lemma 2.1.iv} $J(u) \le \liminf J_{\eps_j}(u_j) \le \limsup J_{\eps_j}(u_j) \le J(u) + \vol{{\set{u = 1}}}$, in particular, $u$ is nontrivial if $\liminf J_{\eps_j}(u_j) < 0$ or $\limsup J_{\eps_j}(u_j) > 0$.
\end{enumroman}
Moreover, $u$ satisfies the equation $- \Delta u = \lambda\, \goodchi_{\set{u > 1}}(x)\, g(x,(u - 1)_+)$ in the classical sense in $\Omega \setminus F(u)$, the free boundary condition in the generalized sense, and vanishes continuously on $\bdry{\Omega}$. If $u$ is nontrivial, then $u > 0$ in $\Omega$, the set $\set{u < 1}$ is connected if $\bdry{\Omega}$ is connected, and the set $\set{u > 1}$ is nonempty.
\end{lemma}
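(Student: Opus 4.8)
The plan is to extract a limit $u$ from the bounded sequence $\seq{u_j}$ via a priori estimates, establish the listed convergences in order of increasing strength, and then pass to the limit in the equation and the free boundary condition. First I would obtain uniform Lipschitz bounds on $\seq{u_j}$: away from $\set{u=1}$ this follows from interior and boundary elliptic estimates since the right-hand side of \eqref{2.1} is uniformly bounded in $L^\infty$ there (using the $L^\infty$-bound on $u_j$ and $(g_1)$); near $\set{u=1}$ one needs the standard free-boundary-type gradient estimate, which comes from testing the equation against a suitable cutoff and using that the penalization term $-\tfrac{1}{\eps}\beta((u_j-1)/\eps)$ has a sign and that $\B$ is bounded — this yields $\abs{\nabla u_j} \le C$ uniformly, with $C$ independent of $j$. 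Granting this, Arzel\`a--Ascoli gives a subsequence converging uniformly on $\closure{\Omega}$ to a Lipschitz function $u$, which proves \ref{Lemma 2.1.i}, and $u=0$ on $\bdry{\Omega}$ is inherited from the uniform convergence.

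Next, for \ref{Lemma 2.1.ii}: on any compact subset $K$ of $\closure{\Omega}\setminus\set{u=1}$, uniform convergence forces $\abs{u_j-1}\ge c>0$ on $K$ for large $j$, so $\beta((u_j-1)/\eps_j)=0$ there (since $\beta$ is supported in $(0,1)$ and $(u_j-1)/\eps_j$ is either negative or $\ge c/\eps_j > 1$), and likewise $g_{\eps_j}(x,(u_j-1)_+)=g(x,(u_j-1)_+)$ on the part where $u_j>1$. Hence $u_j$ solves $-\Delta u_j = \lambda\,\goodchi_{\set{u_j>1}}g(x,(u_j-1)_+)$ classically on $K$ with a right-hand side converging in $C^0_{\loc}$ (on each side of $\set{u=1}$), so Schauder estimates upgrade the convergence to $C^1_{\loc}$ (indeed $C^2_{\loc}$) on $\closure{\Omega}\setminus\set{u=1}$; this simultaneously shows $u\in C^2(\closure{\Omega}\setminus F(u))$ and that $u$ satisfies the PDE classically in $\Omega\setminus F(u)$, noting $\set{u=1}\supset F(u)$ and on the interior of $\set{u\le1}$ the equation reads $-\Delta u = 0$. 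For \ref{Lemma 2.1.iii}, I would test the equation for $u_j$ against $u_j-u$: the gradient term gives $\norm[2]{\nabla(u_j-u)}^2$ up to $\int \nabla u\cdot\nabla(u_j-u)\to 0$, while the right-hand side is $\int(-\tfrac1{\eps_j}\beta(\cdots)+\lambda g_{\eps_j})(u_j-u)\,dx$; the penalization integrand is supported on $\set{0<u_j-1<\eps_j}$ where $\abs{u_j-u}\le \abs{u_j-1}+\abs{u-1}$ is small, and one controls $\int\tfrac1{\eps_j}\beta((u_j-1)/\eps_j)\,dx$ uniformly by integrating the equation against a fixed function (e.g. the torsion function), so this term vanishes in the limit; the $\lambda g_{\eps_j}$ term vanishes by dominated convergence and uniform convergence. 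Hence $\nabla u_j\to\nabla u$ in $L^2$.

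For \ref{Lemma 2.1.iv}, the lower bound $J(u)\le\liminf J_{\eps_j}(u_j)$ follows from strong $H^1_0$ convergence together with the facts that $\B((u_j-1)/\eps_j)\to\goodchi_{\set{u>1}}$ pointwise a.e.\ off $\set{u=1}$ and is bounded, and $G_{\eps_j}(x,(u_j-1)_+)\to G(x,(u-1)_+)$ by $(g_1)$ and dominated convergence; Fatou on the characteristic-function term (which may only be lower semicontinuous) gives $\ge$. The upper bound uses that $\B((u_j-1)/\eps_j)\le 1 = \goodchi_{\set{u>1}} + \goodchi_{\set{u=1}}$ a.e.\ in the limit set $\set{u\ge1}$ up to the $\set{u=1}$ correction, producing the extra term $\vol{\set{u=1}}$. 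Finally, the generalized free boundary condition is obtained by integrating the equation \eqref{2.1} for $u_j$ against $\Phi\cdot\nabla u_j$ (or applying the domain-variation/Pohozaev-type identity for $J_{\eps_j}$), which yields an identity of the form $\int(\tfrac12\abs{\nabla u_j}^2 I - \nabla u_j\otimes\nabla u_j):\nabla\Phi = \int(\B((u_j-1)/\eps_j) - \lambda G_{\eps_j})\divg\Phi + \text{(lower order)}$; on $\set{u\ne1}$ a.e.\ on $\supp\Phi$ one passes to the limit using \ref{Lemma 2.1.ii} and \ref{Lemma 2.1.iii}, and then rewrites the limiting identity as the stated difference of boundary integrals over $\set{u=1\pm\delta^\pm}$ using the coarea formula and the divergence theorem on $\set{1-\delta^-<u<1+\delta^+}$, letting $\delta^\pm\searrow0$ through Sard values. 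The nontriviality/connectedness/nonemptiness assertions transfer from the corresponding properties of the $u_j$ recorded before the lemma: if $u\not\equiv0$ then $u>0$ in $\set{u<1}$ by harmonicity and the maximum principle exactly as for $u_j$, connectedness of $\set{u<1}$ follows since it is where the harmonic-with-boundary-data-$0,1$ representation forces no interior components, and $\set{u>1}\ne\emptyset$ because otherwise $u$ would be globally subharmonic hence $\equiv0$; and $u$ is nontrivial when $\liminf J_{\eps_j}(u_j)<0$ or $\limsup>0$ since $J(0)=0$ and \ref{Lemma 2.1.iv} would be violated by $u\equiv0$.

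I expect the main obstacle to be the uniform Lipschitz estimate near $\set{u=1}$ — i.e.\ showing $\abs{\nabla u_j}\le C$ with $C$ independent of $\eps_j$ despite the singular penalization $\tfrac1{\eps_j}\beta((u_j-1)/\eps_j)$ — since without it neither the compactness in \ref{Lemma 2.1.i} nor the passage to the limit in the free boundary condition goes through; this is where one must exploit the specific structure (the sign of the penalization, $\int_0^1\beta=1$, and the uniform $L^\infty$-bound) rather than generic elliptic theory. A secondary technical point is justifying that the set where $\B((u_j-1)/\eps_j)$ fails to converge to $\goodchi_{\set{u>1}}$ is contained, in the limit, in $\set{u=1}$, which is needed for the sharp two-sided bound in \ref{Lemma 2.1.iv}.
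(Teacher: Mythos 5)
Your overall architecture matches the paper's (uniform Lipschitz bound, Arzel\`a--Ascoli, passage to the limit in the equation, strong $H^1_0$ convergence, the energy sandwich, and a domain-variation identity over $\set{1 - \delta^- < u < 1 + \delta^+}$ for the free boundary condition), but the step you yourself flag as the main obstacle is genuinely missing, and it cannot be filled the way you suggest. The uniform gradient bound through the transition layer $\set{|u_j - 1| < \eps_j}$ does not follow from ``testing the equation against a suitable cutoff'' together with the sign of the penalization and the bound on $\B$: the sign of $\beta$ only gives the one-sided inequality $-\Delta u_j \le \lambda A_0$ (useful for the barrier $\varphi_0$, the $L^\infty$ control, and keeping $\set{u_j \ge 1}$ a fixed distance $\delta_0$ from $\bdry{\Omega}$, where harmonicity then gives uniform $C^{2,\alpha}$ bounds), while in the opposite direction one only has $\Delta u_j \le \frac{2}{\eps_j}\, \goodchi_{\set{|u_j - 1| < \eps_j}} + \lambda A_0$, which blows up as $\eps_j \searrow 0$. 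A test-function estimate of this kind controls at most the total mass $\int \frac{1}{\eps_j}\, \beta\bigl((u_j - 1)/\eps_j\bigr)\, dx$, not $\norm[L^\infty]{\nabla u_j}$. The paper closes exactly this gap by invoking the Caffarelli--Jerison--Kenig uniform Lipschitz theorem (Lemma \ref{Lemma 2.2}, i.e.\ Theorem 5.1 of \cite{MR1906591}), a monotonicity-formula-based result whose whole point is that the constant is independent of $\eps$; without it (or an equivalent ACF-type argument) neither \ref{Lemma 2.1.i} nor the limit passage in the free boundary identity gets off the ground, since the extraction of $u$ itself requires the $j$-independent Lipschitz bound before you can speak of the set $\set{u = 1}$.

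A second, smaller but real gap: you assert that on $\interior{\set{u \le 1}}$ ``the equation reads $-\Delta u = 0$,'' but the statement of the lemma requires the equation in $\Omega \setminus F(u)$ with $F(u) = \bdry{\set{u > 1}}$, and $\interior{\set{u \le 1}}$ may be strictly larger than $\set{u < 1}$ (the plateau $\set{u = 1}$ can have nonempty interior, and your own estimate \ref{Lemma 2.1.iv} allows $\vol{{\set{u=1}}} > 0$). Harmonicity across this plateau is not automatic from the limit procedure, because the penalization can concentrate there. The paper handles it by combining the distributional inequality $-\Delta u \le \lambda\, g(x,(u-1)_+)$ (from $\beta \ge 0$, $\B \le 1$) with the Alt--Caffarelli fact \cite[Remark 4.2]{MR618549} that $\mu = \Delta (u-1)_-$ is a nonnegative Radon measure supported on $\Omega \cap \bdry{\set{u < 1}}$, deducing $u \in W^{2,p}_\loc(\interior{\set{u \le 1}})$ and hence that $\mu$ charges only $\bdry{\set{u<1}} \cap \bdry{\set{u>1}}$. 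Your proposals for \ref{Lemma 2.1.iii} (testing with $u_j - u$ and bounding the penalization mass via a fixed test function) and \ref{Lemma 2.1.iv} are workable variants of the paper's arguments, but as written the proof is incomplete at the two points above, the first being essential.
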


The crucial ingredient in the passage to the limit in the proof of this lemma is the following uniform Lipschitz continuity result of Caffarelli et al.\! \cite{MR1906591}.

\begin{lemma}[{\cite[Theorem 5.1]{MR1906591}}] \label{Lemma 2.2}
Let $u$ be a Lipschitz continuous function on $B_1(0) \subset \R^N$ satisfying the distributional inequalities
\[
\pm \Delta u \le A \left(\frac{1}{\eps}\, \goodchi_{\set{|u - 1| < \eps}}(x) + 1\right)
\]
for some constants $A > 0$ and $0 < \eps \le 1$. Then there exists a constant $C > 0$, depending on $N$, $A$, and $\dint_{B_1(0)} u^2\, dx$, but not on $\eps$, such that
\[
\sup_{x \in B_{1/2}(0)}\, |\nabla u(x)| \le C.
\]
\end{lemma}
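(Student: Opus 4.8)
The statement is a gradient bound that stays uniform as the transition strip $\set{|u - 1| < \eps}$ collapses, even though the coefficient $1/\eps$ in front of $\goodchi_{\set{|u - 1| < \eps}}$ blows up there. The plan is to run a direct argument built on an Alt--Caffarelli--Friedman type monotonicity formula for the two phases $u_1 = (u - 1)_+$ and $u_2 = (u - 1)_-$. By translating the ball it suffices to bound $|\nabla u(x_0)|$ at an arbitrary $x_0 \in B_{1/2}(0)$ by a constant depending only on $N$, $A$, and $\dint_{B_1(0)} u^2\, dx$; a covering argument then gives the supremum bound. I would first dispose of the region away from the strip: at points with $\text{dist}\seq{x_0, \set{|u - 1| < \eps}}$ bounded below we have $|\Delta u| \le A$, so standard interior elliptic estimates bound $|\nabla u(x_0)|$ in terms of $A$ and the local energy. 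Hence a large gradient can occur only within a controlled multiple of $\eps$ of the strip, and the whole problem is the singular transition.

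Near the strip I would use the monotonicity formula. Away from the strip each phase is subharmonic up to a bounded error, since $\Delta u_i \ge -A$ there, while inside the strip the error carries the singular factor $1/\eps$. The engine of the proof is the almost-monotonicity of the product
\[
\Phi(r) = \seq{\frac{1}{r^2} \dint_{B_r(z)} \frac{|\nabla u_1|^2}{|x - z|^{N-2}}\, dx} \seq{\frac{1}{r^2} \dint_{B_r(z)} \frac{|\nabla u_2|^2}{|x - z|^{N-2}}\, dx},
\]
centered at a point $z \in \set{u = 1}$ nearest $x_0$, in which the strip contribution is absorbed as a lower-order error. Monotonicity bounds $\Phi(0^+)$ by $\Phi(3/4)$, which is controlled by the local energy, i.e.\! by $\dint_{B_1(0)} u^2\, dx$ and $A$. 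The limit $\Phi(0^+)$ in turn controls the product $|\nabla u^+(z)|^2\, |\nabla u^-(z)|^2$ of the one-sided gradients. Because the approximating solutions are smooth ($u \in C^{2,\alpha}$), $\nabla u$ is continuous across the thin strip, so both one-sided gradients near $z$ are comparable to $|\nabla u(x_0)|$; the product bound $|\nabla u(x_0)|^4 \le C$ then yields the desired estimate $|\nabla u(x_0)| \le C^{1/4}$.

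The decisive and most delicate step, which I expect to be the main obstacle, is establishing this almost-monotonicity in the presence of the singular strip, with the error controlled \emph{uniformly in} $\eps$ by the data. This is precisely what breaks the circular dependence between the gradient bound and the measure of the strip: an honest treatment must simultaneously produce the codimension-one estimate $\frac{1}{\eps}\, \big|\set{|u - 1| < \eps} \cap B_r\big| \le C\, r^{N-1}$ and the energy bound $\dint_{B_{3/4}(0)} |\nabla u|^2\, dx \le C_0$, neither of which is available a priori from $\dint_{B_1(0)} u^2\, dx$ alone. I would obtain these together with the monotonicity by a bootstrap over dyadic scales, estimating the strip term through the coarea formula and the differential inequality at each scale. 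If this route proves too delicate, the fallback is a blow-up/compactness argument: rescale at points of nearly maximal gradient near the strip, pass to an entire global solution with finite monotone quantity, and use the rigidity case of the monotonicity formula to classify it as one-dimensional, deriving a contradiction with the normalization. Either way, the perturbed monotonicity estimate is the technical heart.
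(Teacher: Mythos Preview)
The paper does not prove Lemma~2.2. It is quoted as Theorem~5.1 of Caffarelli--Jerison--Kenig \cite{MR1906591} and used as a black box in the proof of Lemma~2.1; there is no in-paper argument to compare your proposal against.

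For what it is worth, your outline does target the actual CJK mechanism: the almost-monotonicity of the Alt--Caffarelli--Friedman functional for the two phases $(u-1)_\pm$, with the strip term treated as a lower-order perturbation that is uniform in $\eps$. One place where your sketch is looser than the real argument: you pass to $\Phi(0^+)$ at a nearest point $z$ on $\{u=1\}$ and then assert $|\nabla u(x_0)| \approx |\nabla u(z)|$ ``because $u$ is smooth.'' That step is circular, since comparability of $\nabla u$ at nearby points is exactly the uniform Lipschitz bound you are trying to prove. In \cite{MR1906591} the monotonicity is exploited at \emph{finite} scales, centered at a point of nearly maximal gradient, together with a dichotomy and harmonic comparison at each scale; the limit $r\to 0$ is not what closes the estimate. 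Your fallback blow-up description is in fact closer in spirit to how the bound is obtained.
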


\begin{proof}[Proof of Lemma \ref{Lemma 2.1}]
We may assume that $0 < \eps_j \le 1$. The function $u_j$ is a solution of
\begin{equation} \label{2.2}
\left\{\begin{aligned}
- \Delta u_j & = - \frac{1}{\eps_j}\, \beta\left(\frac{u_j - 1}{\eps_j}\right) + \lambda\, g_{\eps_j}(x,(u_j - 1)_+) && \text{in } \Omega\\[10pt]
u_j & = 0 && \text{on } \bdry{\Omega}.
\end{aligned}\right.
\end{equation}
Since $\seq{u_j}$ is bounded in $L^\infty(\Omega)$, $0 \le g_{\eps_j}(x,(u_j - 1)_+) \le A_0$ for some constant $A_0 > 0$ by $(g_1)$. Let $\varphi_0 > 0$ be the solution of
\[
\left\{\begin{aligned}
- \Delta \varphi_0 & = \lambda A_0 && \text{in } \Omega\\[10pt]
\varphi_0 & = 0 && \text{on } \bdry{\Omega}.
\end{aligned}\right.
\]
Since $\beta \ge 0$, $- \Delta u_j \le \lambda A_0$ in $\Omega$, and hence
\[
0 \le u_j(x) \le \varphi_0(x) \quad \forall x \in \Omega
\]
by the maximum principle. The majorant $\varphi_0$ gives a uniform lower bound $\delta_0 > 0$ on the distance from the set $\set{u_j \ge 1}$ to $\bdry{\Omega}$. Since $u_j$ is positive, harmonic, and bounded by $1$ in a $\delta_0$-neighborhood of $\bdry{\Omega}$, it follows from standard boundary regularity theory that the sequence $\seq{u_j}$ is bounded in the $C^{2,\alpha}$ norm, and hence compact in the $C^2$ norm, in a $\delta_0/2$-neighborhood.

Since $0 \le \beta \le 2\, \goodchi_{(-1,1)}$,
\[
\pm \Delta u_j = \pm \frac{1}{\eps_j}\, \beta\left(\frac{u_j - 1}{\eps_j}\right) \mp \lambda\, g_{\eps_j}(x,(u_j - 1)_+) \le \frac{2}{\eps_j}\, \goodchi_{\set{|u_j - 1| < \eps_j}}(x) + \lambda A_0.
\]
Since $\seq{u_j}$ is bounded in $L^2(\Omega)$, it follows from this and Lemma \ref{Lemma 2.2} that there exists a constant $C > 0$ such that
\[
\max_{x \in B_{r/2}(x_0)}\, |\nabla u_j(x)| \le \frac{C}{r}
\]
whenever $r > 0$ and $B_r(x_0) \subset \Omega$. Hence $u_j$ is uniformly Lipschitz continuous on the compact subset of $\Omega$ at distance greater or equal to $\delta_0/2$ from $\bdry{\Omega}$.

Thus, a renamed subsequence of $\seq{u_j}$ converges uniformly on $\closure{\Omega}$ to a Lipschitz continuous function $u$ with zero boundary values, with strong convergence in $C^2$ on a $\delta_0/2$-neighborhood of $\bdry{\Omega}$. Since $\seq{u_j}$ is bounded in $H^1_0(\Omega)$, a further subsequence converges weakly in $H^1_0(\Omega)$ to $u$.

Next we show that $u$ satisfies the equation $- \Delta u = \lambda\, \goodchi_{\set{u > 1}}(x)\, g(x,(u - 1)_+)$ in the set $\set{u \ne 1}$. Let $\varphi \in C^\infty_0(\set{u > 1})$. Then $u \ge 1 + 2\, \eps$ on the support of $\varphi$ for some $\eps > 0$. For all sufficiently large $j$, $\eps_j < \eps$ and $|u_j - u| < \eps$ in $\Omega$, so $u_j \ge 1 + \eps_j$ on the support of $\varphi$. So testing \eqref{2.2} with $\varphi$ gives
\[
\int_\Omega \nabla u_j \cdot \nabla \varphi\, dx = \int_\Omega \lambda\, g(x,u_j - 1)\, \varphi\, dx,
\]
and passing to the limit gives
\[
\int_\Omega \nabla u \cdot \nabla \varphi\, dx = \int_\Omega \lambda\, g(x,u - 1)\, \varphi\, dx
\]
since $u_j$ converges to $u$ weakly in $H^1_0(\Omega)$ and uniformly on $\Omega$. Hence $u$ is a distributional, and hence a classical, solution of $- \Delta u = \lambda\, g(x,u - 1)$ in the set $\set{u > 1}$. A similar argument shows that $u$ satisfies $\Delta u = 0$ in the set $\set{u < 1}$.

Now we show that $u$ is also harmonic in the possibly larger set $\interior{\set{u \le 1}}$. Since $\beta \ge 0$ and $\B \le 1$, testing \eqref{2.2} with any nonnegative test function and passing to the limit shows that
\begin{equation} \label{2.3}
- \Delta u \le \lambda\, g(x,(u - 1)_+) \quad \text{in } \Omega
\end{equation}
in the distributional sense. On the other hand, since $u$ is harmonic in $\set{u < 1}$, $\mu := \Delta (u - 1)_-$ is a nonnegative Radon measure supported on $\Omega \cap \bdry{\set{u < 1}}$ by Alt and Caffarelli \cite[Remark 4.2]{MR618549}, so
\begin{equation} \label{2.4}
- \Delta u = \mu \ge 0 \quad \text{in } \set{u \le 1}.
\end{equation}
It follows from \eqref{2.3} and \eqref{2.4} that $u \in W^{2,\, p}_\loc(\interior{\set{u \le 1}}),\, 1 < p < \infty$ and hence $\mu$ is actually supported on $\Omega \cap \bdry{\set{u < 1}} \cap \bdry{\set{u > 1}}$, so $u$ is harmonic in $\interior{\set{u \le 1}}$.

Since $u_j$ converges in the $C^2$ norm to $u$ in a neighborhood of $\bdry{\Omega}$ in $\closure{\Omega}$, it suffices to show that $u_j \to u$ locally in $C^1(\Omega \setminus \set{u = 1})$ to prove \ref{Lemma 2.1.ii}. Let $U \strictsubset \set{u > 1}$. Then $u \ge 1 + 2\, \eps$ in $U$ for some $\eps > 0$. For all sufficiently large $j$, $\eps_j < \eps$ and $|u_j - u| < \eps$ in $\Omega$, so $u_j \ge 1 + \eps_j$ in $U$. So \eqref{2.2} gives $- \Delta u_j = \lambda\, g(x,u_j - 1)$ in $U$. Since $g$ is locally H\"{o}lder continuous and $u_j \to u$ uniformly, $g(x,u_j - 1) \to g(x,u - 1)$ in $L^p(U)$ for $1 < p < \infty$. Since $- \Delta u = \lambda\, g(x,u - 1)$ in $U$, then $u_j \to u$ in $W^{2,p}(U)$. Since $W^{2,p}(U) \hookrightarrow C^1(U)$ for $p > 2$, it follows that $u_j \to u$ in $C^1(U)$. A similar argument shows that $u_j \to u$ locally in $C^1(\set{u < 1})$ also.

Since $u_j \rightharpoonup u$ in $H^1_0(\Omega)$, $\norm{u} \le \liminf \norm{u_j}$, so it suffices to show that $\limsup \norm{u_j} \linebreak \le \norm{u}$ to prove \ref{Lemma 2.1.iii}. Multiplying the first equation in \eqref{2.2} by $u_j - 1$, integrating by parts, and noting that $\beta(s/\eps_j)\, s \ge 0$ for all $s$ gives
\begin{multline} \label{2.5}
\int_\Omega |\nabla u_j|^2\, dx \le \int_\Omega \lambda\, g(x,(u_j - 1)_+)\, (u_j - 1)_+\, dx - \int_{\bdry{\Omega}} \frac{\partial u_j}{\partial n}\, d\sigma\\[7.5pt]
\to \int_\Omega \lambda\, g(x,(u - 1)_+)\, (u - 1)_+\, dx - \int_{\bdry{\Omega}} \frac{\partial u}{\partial n}\, d\sigma,
\end{multline}
where $n$ is the outward unit normal to $\bdry{\Omega}$. Fix $0 < \eps < 1$. Recall that $u$ is a solution of $- \Delta u = \lambda\, g(x,u - 1)$ in $\set{u > 1}$. Testing this equation with $\varphi = (u - 1 - \eps)_+$ gives
\begin{equation} \label{2.6}
\int_{\set{u > 1 + \eps}} |\nabla u|^2\, dx = \int_\Omega \lambda\, g(x,(u - 1)_+)\, (u - 1 - \eps)_+\, dx.
\end{equation}
Integrating $(u - 1 + \eps)_-\, \Delta u = 0$ over $\Omega$ gives
\begin{equation} \label{2.7}
\int_{\set{u < 1 - \eps}} |\nabla u|^2\, dx = - (1 - \eps) \int_{\bdry{\Omega}} \frac{\partial u}{\partial n}\, d\sigma.
\end{equation}
Adding \eqref{2.6} and \eqref{2.7}, and letting $\eps \searrow 0$ gives
\[
\int_\Omega |\nabla u|^2\, dx = \int_\Omega \lambda\, g(x,(u - 1)_+)\, (u - 1)_+\, dx - \int_{\bdry{\Omega}} \frac{\partial u}{\partial n}\, d\sigma
\]
since $\dint_{\set{u = 1}} |\nabla u|^2\, dx = 0$. This together with \eqref{2.5} gives
\[
\limsup \int_\Omega |\nabla u_j|^2\, dx \le \int_\Omega |\nabla u|^2\, dx
\]
as desired.

To prove \ref{Lemma 2.1.iv}, write
\begin{multline*}
J_{\eps_j}(u_j) = \int_\Omega \left[\half\, |\nabla u_j|^2 + \B\left(\frac{u_j - 1}{\eps_j}\right) \goodchi_{\set{u \ne 1}}(x) - \lambda\, G_{\eps_j}(x,(u_j - 1)_+)\right] dx\\[7.5pt]
+ \int_{\set{u = 1}} \B\left(\frac{u_j - 1}{\eps_j}\right) dx.
\end{multline*}
Since $u_j \to u$ in $H^1_0(\Omega)$, and $\B((u_j - 1)/\eps_j)\, \goodchi_{\set{u \ne 1}}$ and $G_{\eps_j}(x,(u_j - 1)_+)$ are bounded and converge pointwise to $\goodchi_{\set{u > 1}}$ and $G(x,(u - 1)_+)$, respectively, the first integral converges to $J(u)$. Since
\[
0 \le \int_{\set{u = 1}} \B\left(\frac{u_j - 1}{\eps_j}\right) dx \le \vol{{\set{u = 1}}},
\]
the desired conclusion follows.

Finally we show that $u$ satisfies the free boundary condition in the generalized sense. Let $\Phi \in C^1_0(\Omega,\R^N)$ be such that $u \ne 1$ a.e.\! on the support of $\Phi$. Multiplying the first equation in \eqref{2.2} by $\nabla u_j \cdot \Phi$ and integrating over the set $\set{1 - \delta^- < u < 1 + \delta^+}$ gives
\begin{multline*}
\int_{\set{1 - \delta^- < u < 1 + \delta^+}} \left[- \Delta u_j + \frac{1}{\eps_j}\, \beta\left(\frac{u_j - 1}{\eps_j}\right)\right] \nabla u_j \cdot \Phi\, dx\\[7.5pt]
= \int_{\set{1 - \delta^- < u < 1 + \delta^+}} \lambda\, g_{\eps_j}(x,(u_j - 1)_+)\, \nabla u_j \cdot \Phi\, dx.
\end{multline*}
Noting that the integrand on the left-hand side is equal to
\[
\divg \left(\half\, |\nabla u_j|^2\, \Phi - (\nabla u_j \cdot \Phi)\, \nabla u_j\right) + \nabla u_j\, D\Phi \cdot \nabla u_j - \half\, |\nabla u_j|^2\, \divg \Phi + \nabla \B\left(\frac{u_j - 1}{\eps_j}\right) \cdot \Phi
\]
and integrating by parts gives
\begin{multline} \label{2.8}
\int_{\set{u = 1 + \delta^+} \cup \set{u = 1 - \delta^-}} \left[\half\, |\nabla u_j|^2\, \Phi - (\nabla u_j \cdot \Phi)\, \nabla u_j + \B\left(\frac{u_j - 1}{\eps_j}\right) \Phi\right] \cdot n\, d\sigma\\[7.5pt]
= \int_{\set{1 - \delta^- < u < 1 + \delta^+}} \left(\half\, |\nabla u_j|^2\, \divg \Phi - \nabla u_j\, D\Phi \cdot \nabla u_j\right) dx\\[7.5pt]
+ \int_{\set{1 - \delta^- < u < 1 + \delta^+}} \left[\B\left(\frac{u_j - 1}{\eps_j}\right) \divg \Phi + \lambda\, g_{\eps_j}(x,(u_j - 1)_+)\, \nabla u_j \cdot \Phi\right] dx.
\end{multline}
By \ref{Lemma 2.1.ii}, the integral on the left-hand side converges to
\[
\int_{\set{u = 1 + \delta^+} \cup \set{u = 1 - \delta^-}} \left(\half\, |\nabla u|^2\, \Phi - (\nabla u \cdot \Phi)\, \nabla u\right) \cdot n\, d\sigma + \int_{\set{u = 1 + \delta^+}} \Phi \cdot n\, d\sigma,
\]
which is equal to
\[
\int_{\set{u = 1 + \delta^+}} \left(1 - \half\, |\nabla u|^2\right) \Phi \cdot n\, d\sigma - \int_{\set{u = 1 - \delta^-}} \half\, |\nabla u|^2\, \Phi \cdot n\, d\sigma
\]
since $n = \pm \nabla u/|\nabla u|$ on $\set{u = 1 \pm \delta^\pm}$. The first integral on the right-hand side of \eqref{2.8} converges to
\[
\int_{\set{1 - \delta^- < u < 1 + \delta^+}} \left(\half\, |\nabla u|^2\, \divg \Phi - \nabla u\, D\Phi \cdot \nabla u\right) dx
\]
by \ref{Lemma 2.1.iii}, and the second integral is bounded by
\[
\int_{\set{1 - \delta^- < u < 1 + \delta^+}} \big(|\divg \Phi| + a_3\, |\Phi|\big)\, dx
\]
for some constant $a_3 > 0$. Since $\vol{{\set{u = 1} \cap \supp \Phi}} = 0$, the last two integrals go to zero as $\delta^\pm \searrow 0$. So first letting $j \to \infty$ and then letting $\delta^\pm \searrow 0$ in \eqref{2.8} gives the desired conclusion.
\end{proof}

By $(g_1)$,
\[
J_\eps(u) \ge \int_\Omega \left(\half\, |\nabla u|^2 - \lambda \left[a_1\, (u - 1)_+ + \frac{a_2}{p}\, (u - 1)_+^p\right]\right) dx,
\]
and since $1 < p < 2$, this implies that $J_\eps$ is bounded from below and coercive. Hence $J_\eps$ satisfies the \PS{} condition, i.e., every sequence $\seq{u_j} \subset H^1_0(\Omega)$ such that $J_\eps(u_j)$ is bounded and $J_\eps'(u_j) \to 0$ has a convergent subsequence. Indeed, every such sequence is bounded by coercivity and hence contains a convergent subsequence by a standard argument. First we show that $J_\eps$ has a minimizer $u^\eps_0$. Note that $J$ is also bounded from below. By $(g_2)$, there exists a $\lambda^\ast > 0$ such that for all $\lambda > \lambda^\ast$,
\begin{equation} \label{2.9}
c_1(\lambda) := \inf_{u \in H^1_0(\Omega)}\, J(u) < - \vol{\Omega}.
\end{equation}
For $\lambda > \lambda^\ast$, set
\[
\eps_0(\lambda) = \min \set{\frac{|c_1(\lambda)|}{2 \lambda a_1 \vol{\Omega}},\left(\frac{p a_1}{a_2}\right)^{1/(p-1)}}.
\]

\begin{lemma} \label{Lemma 2.3}
For all $\lambda > \lambda^\ast$ and $\eps < \eps_0(\lambda)$, $J_\eps$ has a minimizer $u^\eps_0 > 0$ satisfying
\begin{equation} \label{2.10}
J_\eps(u^\eps_0) \le c_1(\lambda) + 2 \lambda \eps a_1 \vol{\Omega} < 0.
\end{equation}
\end{lemma}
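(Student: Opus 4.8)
The plan is to exhibit a test function that makes $J_\eps$ nearly as negative as $c_1(\lambda)$, then invoke the already-established coercivity to get a minimizer, and finally use the structure of Problem \eqref{2.1} to upgrade positivity of that minimizer. First I would fix $\lambda > \lambda^\ast$ and $\eps < \eps_0(\lambda)$, pick $v \in H^1_0(\Omega)$ with $J(v) < c_1(\lambda) + \delta$ for small $\delta$ (one may even take $v$ with $J(v)$ close to the infimum), and compare $J_\eps(v)$ with $J(v)$. The only terms that differ are $\int_\Omega \B((v-1)/\eps)\,dx$ versus $\int_\Omega \goodchi_{\set{v>1}}\,dx$, and $-\lambda\int_\Omega G_\eps(x,(v-1)_+)\,dx$ versus $-\lambda\int_\Omega G(x,(v-1)_+)\,dx$. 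For the first, since $\B$ is nondecreasing with $\B=1$ on $[1,\infty)$ and $\B=0$ on $(-\infty,0]$, we have $0 \le \B((v-1)/\eps) - \goodchi_{\set{v>1}} \le \goodchi_{\set{1-\eps < v \le 1}}$ pointwise; actually it suffices to bound this by $\goodchi_{\set{v>1-\eps}} - \goodchi_{\set{v>1}}$, but even the crude bound $\B((v-1)/\eps) \le \goodchi_{\set{v>1-\eps}}$ is not obviously controlled by $\vol{\Omega}$-type quantities independent of $v$, so I would instead just bound $\int_\Omega \B((v-1)/\eps)\,dx \le \vol{\Omega}$ directly — but that loses the smallness. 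The cleaner route, and the one matching the stated bound $2\lambda\eps a_1\vol{\Omega}$, is to bound the difference of the nonlinear terms: since $g_\eps(x,t) = \B(t/\eps)g(x,t) \le g(x,t)$ and $g_\eps(x,t) = g(x,t)$ for $t \ge \eps$, we get $0 \le G(x,s) - G_\eps(x,s) = \int_0^{\min(s,\eps)} (1-\B(t/\eps))g(x,t)\,dt \le \int_0^\eps g(x,t)\,dt \le \int_0^\eps (a_1 + a_2 t^{p-1})\,dt = a_1\eps + \frac{a_2}{p}\eps^p$ by $(g_1)$, and since $\eps < (pa_1/a_2)^{1/(p-1)}$ the second term is at most $a_1\eps$, so $G(x,s) - G_\eps(x,s) \le 2a_1\eps$ for all $x,s$.

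Putting these together, $J_\eps(v) = \int_\Omega[\half|\nabla v|^2 + \B((v-1)/\eps) - \lambda G_\eps(x,(v-1)_+)]\,dx \le \int_\Omega[\half|\nabla v|^2 + \goodchi_{\set{v>1}} - \lambda G(x,(v-1)_+)]\,dx + 2\lambda\eps a_1\vol{\Omega}$, where for the characteristic-function term I use $\B((v-1)/\eps) \le 1$ but group it with $\goodchi_{\set{v>1}}$ only where they agree — more carefully, $\B((v-1)/\eps) \le \goodchi_{\set{v>1}} + \goodchi_{\set{v>1-\eps}} - \goodchi_{\set{v>1}}$ and the extra mass $\goodchi_{\set{1-\eps<v\le 1}}$ need not be small for a fixed $v$. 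To avoid this I would instead choose $v$ more carefully, or simply note: take the minimizing $v$ for $J$; by perturbing $v$ slightly (e.g. replacing $v$ by $(1+\eta)v$ for tiny $\eta$, or using that the level set $\set{v=1}$ can be pushed to have measure zero via a generic dilation) we may assume $\vol{\set{1-\eps < v \le 1}}$ is as small as we like, absorbing it into the $2\lambda\eps a_1\vol{\Omega}$ slack. Hence $J_\eps(v) \le c_1(\lambda) + 2\lambda\eps a_1\vol{\Omega}$, and since $\eps < \eps_0(\lambda) \le |c_1(\lambda)|/(2\lambda a_1\vol{\Omega})$ and $c_1(\lambda) < -\vol{\Omega} < 0$, the right-hand side is $\le c_1(\lambda) + |c_1(\lambda)| = 0$; in fact it is strictly negative because $c_1(\lambda) + 2\lambda\eps a_1 \vol{\Omega} < c_1(\lambda) + |c_1(\lambda)| = 0$ whenever $\eps < |c_1(\lambda)|/(2\lambda a_1\vol{\Omega})$, with the inequality strict for $\eps$ strictly below the threshold.

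Next, since $J_\eps$ is bounded below and coercive (shown in the text via $(g_1)$ and $1<p<2$) and $J_\eps$ is weakly lower semicontinuous on $H^1_0(\Omega)$ (the gradient term is w.l.s.c., and $\int_\Omega \B((u-1)/\eps)\,dx$ and $\int_\Omega G_\eps(x,(u-1)_+)\,dx$ are weakly continuous by compact Sobolev embedding since their integrands have subcritical growth by $(g_1)$), the direct method gives a minimizer $u^\eps_0 \in H^1_0(\Omega)$ with $J_\eps(u^\eps_0) \le J_\eps(v) \le c_1(\lambda) + 2\lambda\eps a_1\vol{\Omega} < 0$, which is \eqref{2.10}. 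Finally, $u^\eps_0$ is a critical point of $J_\eps$, hence a weak solution of \eqref{2.1}, hence (by the regularity discussion preceding Lemma \ref{Lemma 2.1}) $C^{2,\alpha}(\closure\Omega)$; and since $J_\eps(u^\eps_0) < 0 = J_\eps(0)$ it is not identically zero, so by that same discussion $u^\eps_0 > 0$ in $\Omega$. The main obstacle is the bookkeeping around the term $\int_\Omega \B((u-1)/\eps)\,dx$: it is genuinely larger than $\int_\Omega \goodchi_{\set{u>1}}\,dx$ on the transition layer $\set{1-\eps < u \le 1}$, and controlling this excess uniformly requires either the careful choice of competitor $v$ sketched above (arranging $\vol{\set{v=1}}=0$ and the layer thin) or folding it into the definition of the slack constant; getting the constant exactly $2\lambda\eps a_1\vol{\Omega}$ is what the choice of $\eps_0(\lambda)$ is engineered for, so the argument must track that constant precisely.
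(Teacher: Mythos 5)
The key estimate in your second step is right, but your treatment of the penalization term rests on a misreading of where $\B\bigl((v-1)/\eps\bigr)$ lives, and the workaround you build on top of it is both unnecessary and unsound. Since $\B(s)=0$ for $s\le 0$ and $\B\le 1$, the function $\B\bigl((v-1)/\eps\bigr)$ vanishes wherever $v\le 1$; its transition layer is $\set{1<v<1+\eps}$, not $\set{1-\eps<v\le 1}$. Hence $\B\bigl((v-1)/\eps\bigr)\le \goodchi_{\set{v>1}}$ pointwise for \emph{every} $v\in H^1_0(\Omega)$, the difference you worry about is nonpositive, and there is no excess mass to control. With this one observation the whole comparison collapses to the $G$-terms, which you estimate correctly:
\[
J_\eps(v)-J(v)\ \le\ \lambda \int_\Omega \big[G(x,(v-1)_+)-G_\eps(x,(v-1)_+)\big]\,dx\ \le\ \lambda\left(a_1\eps+\frac{a_2}{p}\,\eps^p\right)\vol{\Omega}\ \le\ 2\lambda\eps a_1\vol{\Omega},
\]
using $\eps<(pa_1/a_2)^{1/(p-1)}$, exactly as in the paper. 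Taking the infimum over $v$ (no minimizer of $J$ is needed; near-minimizers suffice) gives \eqref{2.10}, and the strict negativity from $\eps<|c_1(\lambda)|/(2\lambda a_1\vol{\Omega})$, the existence of $u^\eps_0$ (your direct-method argument is fine; the paper instead uses boundedness from below plus the \PS{} condition), and the positivity via the discussion after \eqref{2.1} all go through as you state.

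The genuine gap is the substitute you propose for this observation. You claim $0\le \B\bigl((v-1)/\eps\bigr)-\goodchi_{\set{v>1}}\le \goodchi_{\set{1-\eps<v\le 1}}$, which has the wrong sign and the wrong layer, and you then try to make $\vol{{\set{1-\eps<v\le 1}}}$ small by taking a minimizer of $J$ and applying a ``generic dilation.'' For fixed $\eps>0$ this cannot work: the sets $\set{(1-\eps)/(1+\eta)<v\le 1/(1+\eta)}$ overlap heavily as $\eta$ varies (unlike single level sets $\set{v=c}$), so their measures need not be small for any choice of $\eta$, a quantity of order $\vol{\Omega}$ cannot be absorbed into the fixed slack $2\lambda\eps a_1\vol{\Omega}$, and replacing $v$ by $(1+\eta)v$ changes $J(v)$, which you would then have to re-estimate. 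So as written the argument does not close; the fix is simply the pointwise inequality $\B\bigl((t-1)/\eps\bigr)\le\goodchi_{(1,\infty)}(t)$, after which your proof coincides with the paper's.
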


\begin{proof}
Since $J_\eps$ is bounded from below and satisfies the \PS{} condition, it has a minimizer $u^\eps_0$. Since $\B((t - 1)/\eps) \le \goodchi_{(1,\infty)}(t)$ for all $t$,
\begin{eqnarray*}
J_\eps(u) - J(u) & \le & \lambda \int_\Omega \big[G(x,(u - 1)_+) - G_\eps(x,(u - 1)_+)\big]\, dx\\[7.5pt]
& = & \lambda \int_\Omega \int_0^{(u - 1)_+} \left[1 - B\left(\frac{t}{\eps}\right)\right] g(x,t)\, dt\, dx\\[7.5pt]
& \le & \lambda \int_\Omega \int_0^\eps g(x,t)\, dt\, dx\\[7.5pt]
& \le & \lambda \left(a_1 \eps + \frac{a_2}{p}\, \eps^p\right) \vol{\Omega}
\end{eqnarray*}
by $(g_1)$, and \eqref{2.10} follows from this for $\eps < \eps_0(\lambda)$. Since $J_\eps(u^\eps_0) < 0 = J_\eps(0)$, $u^\eps_0$ is nontrivial and hence positive.
\end{proof}

Next we show that $J_\eps$ has a second nontrivial critical point $u^\eps_1$ using the mountain pass lemma of Ambrosetti and Rabinowitz \cite{MR0370183}, which we now recall.

\begin{lemma}[{\cite[Theorem 2.1]{MR0370183}}] \label{Lemma 2.4}
Let $I$ be a $C^1$-functional defined on a Banach space $X$. Assume that $I$ satisfies the {\em \PS{}} condition and that there exist an open set $U \subset X$, $u_0 \in U$, and $u_1 \in X \setminus \closure{U}$ such that
\[
\inf_{u \in \bdry{U}}\, I(u) > \max \set{I(u_0),I(u_1)}.
\]
Then $I$ has a critical point at the level
\[
c := \inf_{\gamma \in \Gamma}\, \max_{u \in \gamma([0,1])}\, I(u) \ge \inf_{u \in \bdry{U}}\, I(u),
\]
where $\Gamma = \bgset{\gamma \in C([0,1],X) : \gamma(0) = u_0,\, \gamma(1) = u_1}$ is the class of paths in $X$ joining $u_0$ and $u_1$.
\end{lemma}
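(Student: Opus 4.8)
The plan is to prove this by the classical deformation argument of critical point theory, in two stages: a purely topological lower bound for $c$, followed by a contradiction argument based on deforming sublevel sets.

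First I would establish $c \ge \inf_{u \in \bdry{U}} I(u)$. Since $u_0 \in U$, $u_1 \in X \setminus \closure{U}$, and $[0,1]$ is connected, every path $\gamma \in \Gamma$ must meet $\bdry{U}$: the preimages $\gamma^{-1}(U)$ and $\gamma^{-1}(X \setminus \closure{U})$ are disjoint open subsets of $[0,1]$ containing $0$ and $1$ respectively, so they cannot cover $[0,1]$, and any parameter lying in neither yields a point of $\gamma$ on $\bdry{U}$. Hence $\max_s I(\gamma(s)) \ge \inf_{\bdry{U}} I$ for every $\gamma$, and taking the infimum over $\Gamma$ gives the inequality. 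In particular $c > \max\set{I(u_0), I(u_1)}$ by hypothesis.

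The heart of the matter is a \emph{deformation lemma}: assuming $I$ has no critical point at level $c$, I would produce, for each small $\bar\eps > 0$, a number $\eps \in (0, \bar\eps)$ and a continuous map $\eta : [0,1] \times X \to X$ with $\eta(0, \cdot) = \mathrm{id}$, with $t \mapsto I(\eta(t,u))$ nonincreasing, with $\eta(t,u) = u$ whenever $|I(u) - c| \ge \bar\eps$, and with $\eta(1, \cdot)$ mapping $\set{I \le c + \eps}$ into $\set{I \le c - \eps}$. Because $X$ is only a Banach space, one cannot flow along $-I'$ directly; instead one uses a \emph{pseudo-gradient vector field}, a locally Lipschitz map $V$ on the regular set $\set{I' \ne 0}$ with $\norm{V(u)} \le 2\, \norm[X^\ast]{I'(u)}$ and $\langle I'(u), V(u) \rangle \ge \norm[X^\ast]{I'(u)}^2$; such $V$ exists by choosing almost-maximizing directions pointwise and patching them with a locally finite partition of unity, using that the metric space $X$ is paracompact. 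Since $c$ is not a critical value and $I$ satisfies \PS{}, there are $\delta, \bar\eps > 0$ with $\norm[X^\ast]{I'(u)} \ge \delta$ on the band $\set{|I - c| \le \bar\eps}$, for otherwise a \PS{} sequence at level $c$ would converge along a subsequence to a critical point at level $c$. Multiplying $-V/\norm{V}^2$ by a Lipschitz cutoff equal to $1$ on $\set{|I - c| \le \eps}$ and $0$ outside $\set{|I - c| \le \bar\eps}$ yields a bounded, locally Lipschitz field (bounded because the lower bound on $I'$ forces $\norm{V} \ge \delta$) whose flow $\eta$ is globally defined; along it $\frac{d}{dt} I(\eta) \le 0$, and inside the band $I$ decreases at a uniform rate, so for $\eps$ small enough $\eta(1, \cdot)$ pushes $\set{I \le c + \eps}$ below level $c - \eps$.

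With the deformation lemma in hand I would finish by contradiction. Suppose $c$ is not a critical value. Choose $\bar\eps > 0$ small enough that $c - \bar\eps > \max\set{I(u_0), I(u_1)}$, possible by the strict inequality above, and take the corresponding $\eps$ and $\eta$. Then $|I(u_i) - c| > \bar\eps$ for $i = 0, 1$, so $\eta(1, u_i) = u_i$. By the definition of $c$, pick $\gamma \in \Gamma$ with $\max_s I(\gamma(s)) < c + \eps$ and set $\tilde\gamma = \eta(1, \gamma(\cdot))$; this is continuous with $\tilde\gamma(0) = u_0$ and $\tilde\gamma(1) = u_1$, so $\tilde\gamma \in \Gamma$, yet $\max_s I(\tilde\gamma(s)) \le c - \eps < c$, contradicting the definition of $c$. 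Hence $c$ is a critical value. The main obstacle is the deformation lemma itself, specifically the pseudo-gradient construction and the uniform lower bound on $\norm[X^\ast]{I'}$ over the band around $c$; the role of \PS{} is precisely to furnish that lower bound, while paracompactness of $X$ furnishes the globally integrable locally Lipschitz field.
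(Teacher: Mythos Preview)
Your argument is correct and is essentially the original Ambrosetti--Rabinowitz proof: the connectedness argument for the lower bound $c \ge \inf_{\bdry{U}} I$, the pseudo-gradient construction in a general Banach space, the \PS{} condition yielding a uniform lower bound on $\norm[X^\ast]{I'}$ on a band around a regular value, and the resulting deformation contradicting the minimax definition of $c$. The paper, however, does not prove this lemma at all; it is quoted as \cite[Theorem 2.1]{MR0370183} and used as a black box, so there is no ``paper's own proof'' to compare against beyond the reference to the classical source, whose approach you have reproduced.
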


\begin{lemma} \label{Lemma 2.5}
For all $\lambda > \lambda^\ast$, there exists a constant $c_2(\lambda) > 0$ such that for all $\eps < \eps_0(\lambda)$, $J_\eps$ has a second critical point $0 < u^\eps_1 \le u^\eps_0$ satisfying
\[
c_2(\lambda) \le J_\eps(u^\eps_1) \le \half \norm{u^\eps_0}^2 + \vol{\Omega}.
\]
In particular, $\set{u^\eps_0 > 1} \supset \set{u^\eps_1 > 1} \ne \emptyset$.
\end{lemma}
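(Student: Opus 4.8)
To produce the second critical point $u^\eps_1$, I would apply the mountain pass lemma (Lemma~\ref{Lemma 2.4}) to $J_\eps$ with the base point $u_0$ taken to be the minimizer $u^\eps_0$ from Lemma~\ref{Lemma 2.3}. The geometry should be set up in a suitable order interval or cone so that the second critical point lies below $u^\eps_0$; a natural device is to work with the truncated functional obtained by replacing the nonlinearity for $u > u^\eps_0$ so that any critical point automatically satisfies $0 \le u^\eps_1 \le u^\eps_0$ by the maximum principle, then observe the truncated functional agrees with $J_\eps$ on the relevant set. The "valley" near the origin is the key structural fact: because $g_\eps(x,s) = 0$ for $s \le \eps$, the term $\lambda\, G_\eps(x,(u-1)_+)$ and the term $\B((u-1)/\eps)$ both vanish when $u \le 1$, so for $u$ small in $H^1_0(\Omega)$ (hence small in $L^\infty$ away from $\bdry\Omega$, or at least with $\set{u>1}$ of small measure) one has $J_\eps(u) \ge 0 = J_\eps(0)$, while $J_\eps(u^\eps_0) < 0$. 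One then checks there is a genuine barrier $\inf_{\bdry U} J_\eps > 0$ on the boundary of a small ball (or small-measure set) $U$ around $0$, with $u^\eps_0 \notin \closure U$.

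\smallskip
\noindent\textbf{Key steps, in order.}
\begin{enumroman}
\item Establish the mountain pass geometry: show there exists $\rho > 0$, \emph{independent of $\eps$}, and a constant $c_2(\lambda) > 0$ such that $J_\eps(u) \ge c_2(\lambda)$ whenever $u$ belongs to $\bdry U$, where $U$ is an appropriate neighborhood of $0$ (e.g.\ $U = \set{u : \vol{\set{u > 1}} < \rho}$ or a small $H^1_0$-ball). The point is that on $\bdry U$ the set $\set{u > 1}$ has positive measure bounded below, so the term $\goodchi$-like contribution $\int \B((u-1)/\eps)$ is bounded below — but this needs care since $\B((u-1)/\eps)$ can be small where $0 < u - 1 < \eps$; the cleaner route is to use the Sobolev/Poincar\'e inequality together with $(g_1)$, $1 < p < 2$, to get $J_\eps(u) \ge \half \norm{u}^2 - C\lambda(\norm{u} + \norm{u}^p)$ minus a small-measure error, which is positive on a sphere of fixed small radius, uniformly in $\eps$.
\item Apply Lemma~\ref{Lemma 2.4} with $u_0 = u^\eps_0$ (after checking $u^\eps_0 \notin \closure U$, which follows since $J_\eps(u^\eps_0) < 0 < c_2(\lambda) \le \inf_{\bdry U} J_\eps$, while $J_\eps \ge 0$ on $\closure U$) and $u_1 = 0$ (or vice versa — either endpoint works since $J_\eps(0) = 0$). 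This yields a critical point $u^\eps_1$ at level $J_\eps(u^\eps_1) = c \ge c_2(\lambda) > 0$.
\item Upper bound: estimate $c$ using the straight-line path $\gamma(t) = t\, u^\eps_0$. Along this path, $J_\eps(t u^\eps_0) \le \half t^2 \norm{u^\eps_0}^2 + \vol{\Omega}$ since the penalty term $\int_\Omega \B((tu^\eps_0 - 1)/\eps)\, dx \le \vol{\set{tu^\eps_0 > 1}} \le \vol{\Omega}$ and $-\lambda G_\eps \le 0$; taking the max over $t \in [0,1]$ gives $c \le \half \norm{u^\eps_0}^2 + \vol{\Omega}$.
\item Sign/order and nontriviality: since $J_\eps(u^\eps_1) > 0 = J_\eps(0)$, $u^\eps_1$ is not identically zero, hence $u^\eps_1 > 0$ in $\Omega$ and $\set{u^\eps_1 > 1} \ne \emptyset$ by the strong maximum principle argument already recorded in the paragraph following \eqref{2.1}. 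The inequality $u^\eps_1 \le u^\eps_0$ comes from the truncation construction together with the fact that $u^\eps_0$ is a supersolution of the relevant equation (or from a standard sub-supersolution / comparison argument, using that $u^\eps_0$ solves \eqref{2.1} and is positive). The inclusion $\set{u^\eps_0 > 1} \supset \set{u^\eps_1 > 1}$ is then immediate, and the right side is nonempty by the previous sentence.
\end{enumroman}

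\smallskip
\noindent\textbf{Main obstacle.} The delicate point is making the mountain pass barrier $c_2(\lambda)$ \emph{uniform in $\eps$} and strictly positive: the penalty term $\B((u-1)/\eps)$ degenerates as $\eps \to 0$ on the thin strip $\set{1 < u < 1 + \eps}$, so one cannot simply bound it below by the measure of $\set{u > 1}$; instead the positivity of the barrier must be extracted from the quadratic term $\half\norm{u}^2$ dominating the sublinear/subquadratic growth of $G_\eps$ (which is controlled uniformly in $\eps$ by $(g_1)$ since $G_\eps \le G$), so that $J_\eps(u) \ge \half\norm{u}^2 - C\lambda\bigl(\norm{u} + \norm{u}^p\bigr) \ge c_2(\lambda) > 0$ on a fixed sphere $\norm{u} = \rho(\lambda)$, with $\rho$ chosen small enough depending only on $\lambda$, $a_1$, $a_2$, $p$, and the Sobolev constant — and then separately arguing that $u^\eps_0$ lies outside that ball because its negative energy forces $\norm{u^\eps_0}$ to be bounded \emph{below} (from $J_\eps(u^\eps_0) < 0$ and the coercive lower bound). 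A secondary technical nuisance is justifying $u^\eps_1 \le u^\eps_0$ cleanly; the truncation-of-the-functional trick handles this, but one must verify that the truncated functional still satisfies \PS{} and still has the same mountain pass level, which is routine given coercivity.
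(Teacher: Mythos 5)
Your overall architecture is the paper's: truncate the nonlinearity at $u^\eps_0$ so that the maximum principle forces any critical point of the truncated functional below $u^\eps_0$, run the mountain pass lemma between $0$ and $u^\eps_0$, and bound the level from above along the straight path $t \mapsto t u^\eps_0$ using $\B \le 1$ and $-\lambda G_\eps \le 0$. Those parts are fine. But the step you yourself flag as the crux --- the uniform positive barrier on a small sphere --- is carried out incorrectly, and as written it fails. The lower bound you propose, $J_\eps(u) \ge \half\norm{u}^2 - C\lambda\bigl(\norm{u} + \norm{u}^p\bigr)$ with $1 < p < 2$, cannot be positive on any sphere of \emph{small} radius $\rho$: as $\rho \searrow 0$ the linear term $-C\lambda\rho$ (and the term $-C\lambda\rho^p$, $p>1$) dominates $\half\rho^2$, so the right-hand side is negative there. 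It is positive only for \emph{large} $\rho$, but then you would need $\norm{u^\eps_0} > \rho$, which is not available (the minimizer's norm is bounded above, not below by an arbitrarily large constant). So steps $(i)$ and the ``Main obstacle'' paragraph do not establish the mountain pass geometry, and the claim that $c_2(\lambda)$ is obtained ``on a fixed sphere of small radius'' is unjustified; your fallback idea with $U = \set{u : \vol{\set{u>1}} < \rho}$ is also abandoned (rightly, since $\int \B((u-1)/\eps)\,dx$ degenerates on the strip $\set{1 < u < 1+\eps}$), leaving no working barrier.

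The missing idea is to exploit that the forcing term is inactive on $\set{u \le 1}$ \emph{before} passing to norms. Since $\widetilde{G}_\eps(x,s) = 0$ for $s \le 1$ and $\widetilde{G}_\eps(x,s) \le a_1 (s-1)_+ + \frac{a_2}{p}(s-1)_+^p$, one has the pointwise bound
\[
\widetilde{G}_\eps(x,s) \le \left(a_1 + \frac{a_2}{p}\right) |s|^q \quad \text{for all } s,
\]
for any exponent $q > 2$ (subcritical, i.e.\ $q \le 2N/(N-2)$ when $N \ge 3$), because on $s \ge 1$ both $(s-1)_+$ and $(s-1)_+^p$ are dominated by $s^q$. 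Together with $\B_\eps \ge 0$ this gives $\widetilde{J}_\eps(u) \ge \half\norm{u}^2 - C\lambda \norm{u}^q$ with constants independent of $\eps$, which \emph{is} positive on a small sphere $\norm{u} = \rho$, yields a uniform $c_2(\lambda) > 0$, and automatically places $u^\eps_0$ outside $\closure{B_\rho(0)}$ since $\widetilde{J}_\eps(u^\eps_0) = J_\eps(u^\eps_0) < 0$ while the lower bound is nonnegative on that ball. By discarding the factor $(\,\cdot\,-1)_+$ in favor of $\norm{u}$ and $\norm{u}^p$ you threw away exactly the structure that makes the geometry work; restoring it as above repairs the proof and brings it in line with the paper's argument.
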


\begin{proof}
For $\eps < \eps_0(\lambda)$, let
\[
\begin{split}
\beta_\eps(x,s) = \frac{1}{\eps}\, \beta\left(\frac{\min \set{s,u^\eps_0(x)} - 1}{\eps}\right), \qquad \B_\eps(x,s) = \int_0^s \beta_\eps(x,t)\, dt,\\[7.5pt]
\widetilde{g}_\eps(x,s) = g_\eps(x,(\min \set{s,u^\eps_0(x)} - 1)_+), \qquad \widetilde{G}_\eps(x,s) = \int_0^s \widetilde{g}_\eps(x,t)\, dt
\end{split}
\]
and set
\[
\widetilde{J}_\eps(u) = \int_\Omega \left[\half\, |\nabla u|^2 + \B_\eps(x,u) - \lambda\, \widetilde{G}_\eps(x,u)\right] dx, \quad u \in H^1_0(\Omega).
\]
The functional $\widetilde{J}_\eps$ is of class $C^1$ and its critical points coincide with weak solutions of the problem
\[
\left\{\begin{aligned}
- \Delta u & = - \beta_\eps(x,u) + \lambda\, \widetilde{g}_\eps(x,u) && \text{in } \Omega\\[10pt]
u & = 0 && \text{on } \bdry{\Omega}.
\end{aligned}\right.
\]
If $u$ is a weak solution of this problem, then $u$ is also a classical solution by elliptic regularity theory and $u \le u^\eps_0$ by the maximum principle. So $u$ is a solution of problem \eqref{2.1}, and hence a critical point of $J_\eps$, with $J_\eps(u) = \widetilde{J}_\eps(u)$. We will show that $\widetilde{J}_\eps$ has a critical point $u^\eps_1$ satisfying
\[
c_2(\lambda) \le \widetilde{J}_\eps(u^\eps_1) \le \half \norm{u^\eps_0}^2 + \vol{\Omega}
\]
for some constant $c_2(\lambda) > 0$. This will prove the lemma since it follows from $J_\eps(u^\eps_1) = \widetilde{J}_\eps(u^\eps_1) > 0 > J_\eps(u^\eps_0)$ that $u^\eps_1$ is positive and distinct from $u^\eps_0$.

We apply Lemma \ref{Lemma 2.4} to the functional $\widetilde{J}_\eps$, which is also coercive and hence satisfies the \PS{} condition. Since $\widetilde{g}_\eps(x,s) = g_\eps(x,0) = 0$ for $s \le 1$ and
\[
\widetilde{g}_\eps(x,s) \le a_1 + a_2\, (\min \set{s,u^\eps_0(x)} - 1)_+^{p-1} \le a_1 + a_2\, (s - 1)^{p-1}
\]
for $s > 1$ by $(g_1)$,
\[
\widetilde{G}_\eps(x,s) \le a_1\, (s - 1)_+ + \frac{a_2}{p}\, (s - 1)_+^p \le \left(a_1 + \frac{a_2}{p}\right) |s|^q
\]
for all $s$, where $q > 2$ if $N = 2$ and $2 < q \le 2N/(N - 2)$ if $N \ge 3$. Since $\B_\eps(x,s) \ge 0$ for all $s$, then
\[
\widetilde{J}_\eps(u) \ge \int_\Omega \left[\half\, |\nabla u|^2 - \lambda \left(a_1 + \frac{a_2}{p}\right) |u|^q\right] dx.
\]
Since $L^q(\Omega) \hookrightarrow H^1_0(\Omega)$ and $q > 2$, the infimum $c_2(\lambda)$ of the last integral on $\bdry{B_\rho(0)}$ is positive for all sufficiently small $\rho > 0$, where $B_\rho(0) = \set{u \in H^1_0(\Omega) : \norm{u} < \rho}$. Since $\widetilde{J}_\eps(u^\eps_0) = J_\eps(u^\eps_0) < 0 = \widetilde{J}_\eps(0)$, taking $\rho < \norm{u^\eps_0}$ and applying Lemma \ref{Lemma 2.4} now gives a critical point $u^\eps_1$ of $\widetilde{J}_\eps$ with
\[
\widetilde{J}_\eps(u^\eps_1) = \inf_{\gamma \in \Gamma}\, \max_{u \in \gamma([0,1])}\, \widetilde{J}_\eps(u) \ge \inf_{u \in \bdry{B_\rho(0)}}\, \widetilde{J}_\eps(u) \ge c_2(\lambda),
\]
where $\Gamma = \set{\gamma \in C([0,1],H^1_0(\Omega)) : \gamma(0) = 0,\, \gamma(1) = u^\eps_0}$ is the class of paths joining $0$ and $u^\eps_0$. For the path $\gamma_0(t) = tu^\eps_0,\, t \in [0,1]$,
\[
\widetilde{J}_\eps(\gamma_0(t)) \le \int_\Omega \left(\half\, |\nabla u^\eps_0|^2 + \B_\eps(x,u^\eps_0)\right) dx
\]
since $\B_\eps(x,s)$ is nondecreasing in $s$ and $\widetilde{G}_\eps(x,s) \ge 0$ for all $s$ by $(g_2)$. Since
\[
\B_\eps(x,u^\eps_0(x)) = \int_0^{u^\eps_0(x)} \frac{1}{\eps}\, \beta\left(\frac{t - 1}{\eps}\right) dt = \B\left(\frac{u^\eps_0(x) - 1}{\eps}\right) \le 1,
\]
then
\[
\widetilde{J}_\eps(u^\eps_1) \le \max_{u \in \gamma_0([0,1])}\, \widetilde{J}_\eps(u) \le \int_\Omega \left(\half\, |\nabla u^\eps_0|^2 + 1\right) dx = \half \norm{u^\eps_0}^2 + \vol{\Omega}. \QED
\]
\end{proof}

We are now ready to prove Theorem \ref{Theorem 1.1}.

\begin{proof}[Proof of Theorem \ref{Theorem 1.1}]
Let $\lambda > \lambda^\ast$ and take a sequence $\eps_j \searrow 0$ with $\eps_j < \eps_0(\lambda)$. For each $j$, Lemma \ref{Lemma 2.3} gives a minimizer $u^{\eps_j}_0 > 0$ of $J_{\eps_j}$ satisfying
\begin{equation} \label{2.11}
J_{\eps_j}(u^{\eps_j}_0) \le c_1(\lambda) + 2 \lambda \eps_j\, a_1 \vol{\Omega} < 0
\end{equation}
and Lemma \ref{Lemma 2.5} gives a second critical point $0 < u^{\eps_j}_1 \le u^{\eps_j}_0$ satisfying
\begin{equation} \label{2.12}
c_2(\lambda) \le J_{\eps_j}(u^{\eps_j}_1) \le \half \norm{u^{\eps_j}_0}^2 + \vol{\Omega}.
\end{equation}
We will show that the sequences $\seq{u^{\eps_j}_0}$ and $\seq{u^{\eps_j}_1}$ are bounded in $H^1_0(\Omega) \cap L^\infty(\Omega)$ and apply Lemma \ref{Lemma 2.1}.

Since $\B \ge 0$ and
\[
G_\eps(x,(s - 1)_+) \le a_1\, (s - 1)_+ + \frac{a_2}{p}\, (s - 1)_+^p \le \left(a_1 + \frac{a_2}{p}\right) |s|^p
\]
for all $s$ by $(g_1)$,
\[
\half \norm{u^\eps_0}^2 \le J_\eps(u^\eps_0) + \lambda \left(a_1 + \frac{a_2}{p}\right) \int_\Omega (u^\eps_0)^p\, dx.
\]
Since $J_{\eps_j}(u^{\eps_j}_0) < 0$ by \eqref{2.11} and $p < 2$, it follows from this that $\seq{u^{\eps_j}_0}$ is bounded in $H^1_0(\Omega)$. Then $J_{\eps_j}(u^{\eps_j}_1)$ is bounded by \eqref{2.12}, so a similar argument shows that $\seq{u^{\eps_j}_1}$ is also bounded in $H^1_0(\Omega)$.

Since $g_\eps(x,(s - 1)_+) = g_\eps(x,0) = 0$ for $s \le 1$ and
\[
g_\eps(x,(s - 1)_+) \le a_1 + a_2\, (s - 1)^{p-1} \le (a_1 + a_2)\, s^{p-1}
\]
for $s > 1$ by $(g_1)$,
\[
- \Delta u^{\eps_j}_0 = - \frac{1}{\eps_j}\, \beta\left(\frac{u^{\eps_j}_0 - 1}{\eps_j}\right) + \lambda\, g_{\eps_j}(x,(u^{\eps_j}_0 - 1)_+) \le \lambda\, (a_1 + a_2)\, (u^{\eps_j}_0)^{p-1}.
\]
This together with the fact that $\seq{u^{\eps_j}_0}$ is bounded in $H^1_0(\Omega)$ implies that $\seq{u^{\eps_j}_0}$ is also bounded in $L^\infty(\Omega)$ (see, e.g., Bonforte et al.\! \cite[Theorem 3.1]{MR3155966}). Then so is $\seq{u^{\eps_j}_1}$ since $0 < u^{\eps_j}_1 \le u^{\eps_j}_0$.

By Lemma \ref{Lemma 2.1}, for a renamed subsequence of $\seq{\eps_j}$, the sequences $\seq{u^{\eps_j}_0}$ and $\seq{u^{\eps_j}_1}$ converge uniformly to Lipschitz continuous solutions $u_0, u_1 \in H^1_0(\Omega) \cap C^2(\closure{\Omega} \setminus F(u))$ of problem \eqref{1.1} that satisfy the equation $- \Delta u = \lambda\, \goodchi_{\set{u > 1}}(x)\, g(x,(u - 1)_+)$ in the classical sense in $\Omega \setminus F(u)$, the free boundary condition in the generalized sense, and vanish continuously on $\bdry{\Omega}$. Moreover,
\begin{equation} \label{2.13}
J(u_0) \le \liminf J_{\eps_j}(u^{\eps_j}_0) \le \limsup J_{\eps_j}(u^{\eps_j}_0) \le J(u_0) + \vol{{\set{u_0 = 1}}}
\end{equation}
and
\begin{equation} \label{2.14}
J(u_1) \le \liminf J_{\eps_j}(u^{\eps_j}_1) \le \limsup J_{\eps_j}(u^{\eps_j}_1) \le J(u_1) + \vol{{\set{u_1 = 1}}}.
\end{equation}
Combining \eqref{2.13} with \eqref{2.11} and \eqref{2.9} gives $J(u_0) \le \limsup J_{\eps_j}(u^{\eps_j}_0) \le c_1(\lambda) \le J(u_0)$, so
\begin{equation} \label{2.15}
J(u_0) = c_1(\lambda) < - \vol{\Omega}.
\end{equation}
On the other hand, combining \eqref{2.14} with \eqref{2.12} gives $J(u_1) + \vol{{\set{u_1 = 1}}} \ge \liminf J_{\eps_j}(u^{\eps_j}_1) \ge c_2(\lambda) > 0$, so
\begin{equation} \label{2.16}
J(u_1) > - \vol{{\set{u_1 = 1}}} \ge - \vol{\Omega}.
\end{equation}
It follows from \eqref{2.15} and \eqref{2.16} that $u_0$ and $u_1$ are nontrivial and distinct, $u_0$ is a minimizer of $J$, and $u_1$ is not a minimizer. Since $u^{\eps_j}_1 \le u^{\eps_j}_0$ for all $j$, $u_1 \le u_0$. Since $u_1$ is nontrivial, then $0 < u_1 \le u_0$, the sets $\set{u_0 < 1} \subset \set{u_1 < 1}$ are connected if $\bdry{\Omega}$ is connected, and the sets $\set{u_0 > 1} \supset \set{u_1 > 1}$ are nonempty.
\end{proof}

\def\cdprime{$''$}

\end{document}